\theoremstyle{plain}
\newtheorem{theorem}{Theorem}
\newtheorem{claim}[theorem]{Claim}
\newtheorem{conjecture}[theorem]{Conjecture}
\newtheorem{corollary}[theorem]{Corollary}
\newtheorem{lemma}[theorem]{Lemma}
\newtheorem{proposition}[theorem]{Proposition}
\newtheorem{remark}[theorem]{Remark}
\theoremstyle{remark}
\newtheorem*{proofof1}{Proof of Theorem \ref{TheoremMain1}}
\newtheorem*{proofof2}{Proof of Theorem \ref{TheoremMain2}}
\newcommand{\field}[1]{\mathbb{#1}}
\newcommand{\N}{\field{N}}
\newcommand{\Z}{\field{Z}}
\newcommand{\K}{\field{K}}
\DeclareMathOperator{\proj}{Proj}
\def\kwadrat{\hfill$\square$}
\def\kwadrat{\hfill$\square$}
\newcommand{\superscript}[1]{\ensuremath{^{\textrm{#1}}}}
\def\wu{\superscript{*}}
\def\wg{\superscript{\dag}}
\begin{document}

\title{On the toric ideal of a matroid}

\author[Micha\l\ Laso\'{n}]{Micha\l\ Laso\'{n}\wu\footnote{\wu michalason@gmail.com; Institute of Mathematics of the Polish Academy of Sciences, ul.\'{S}niadeckich 8, 00-956 Warszawa, Poland and Theoretical Computer Science Department, Faculty of Mathematics and Computer Science, Jagiellonian University, ul.\L ojasiewicza 6, 30-348 Krak\'{o}w, Poland}}

\author[Mateusz Micha\l ek]{Mateusz Micha\l ek\wg\footnote{\wg wajcha2@poczta.onet.pl; Institute of Mathematics of the Polish Academy of Sciences, ul.\'{S}niadeckich 8, 00-956 Warszawa, Poland and Max Planck Institute for Mathematics, Vivatsgasse 7, 53111 Bonn, Germany}}

\thanks{Research supported by the Polish National Science Centre grant No. 2012/05/D/ST1/01063.}
\thanks{The authors thank for the hospitality the Max Planck Institute for Mathematics, where the paper was completed.}
\keywords{Matroid, toric ideal, base exchange, strongly base orderable matroid}

\begin{abstract}
Describing minimal generating set of a toric ideal is a well-studied and difficult problem. In 1980 White conjectured that the toric ideal associated to a matroid is equal to the ideal generated by quadratic binomials corresponding to symmetric exchanges. 

We prove White{'}s conjecture up to saturation, that is that the saturations of both ideals are equal. In the language of algebraic geometry this means that both ideals define the same projective scheme. Additionally we prove the full conjecture for strongly base orderable matroids. 
\end{abstract}

\maketitle

\section{Introduction}

Let $M$ be a matroid on a ground set $E$ with the set of bases $\mathfrak{B}\subset\mathcal{P}(E)$ (the reader is referred to \cite{Ox} for background of matroid theory). For a fixed field $\K$ let
$S_M:=\K[y_B:B\in\mathfrak{B}]$
be a polynomial ring. Let $\varphi_M$ be the $\K$-homomorphism:
$$\varphi_M:S_M\ni y_B\rightarrow\prod_{e\in B}x_e\in\K[x_e:e\in E].$$
The \emph{toric ideal of a matroid} $M$, denoted by $I_M$, is the kernel of the map $\varphi_M$. For a realizable matroid $M$ the toric variety associated with the ideal $I_M$ has a very nice embedding as a subvariety of a Grassmannian \cite{GeGoMaSe}. It is the closure of the torus orbit of the point of the Grassmannian corresponding to the matroid $M$.

The family $\mathfrak{B}$ of bases, from the definition of a matroid, is nonempty and satisfies \emph{exchange property} --- for every bases $B_1,B_2$ and $e\in B_1\setminus B_2$ there exists $f\in B_2\setminus B_1$, such that $(B_1\setminus e)\cup f$ is also a basis.

Brualdi \cite{Br} showed that bases of a matroid satisfy also \emph{symmetric exchange property} ---
for every bases $B_1,B_2$ and $e\in B_1\setminus B_2$ there exists $f\in B_2\setminus B_1$, such that both $(B_1\setminus e)\cup f$ and $(B_2\setminus f)\cup e$ are bases.

Surprisingly, even a stronger property, known as \emph{multiple symmetric exchange property}, is true  ---
for every bases $B_1,B_2$ and $A_1\subset B_1$ there exists $A_2\subset B_2$, such that both $(B_1\setminus A_1)\cup A_2$ and $(B_2\setminus A_2)\cup A_1$ are bases
(for simple proofs see \cite{LaLu,Wo}, and \cite{Ku,La} for more exchange properties).

Suppose that a pair of bases $D_1,D_2$ is obtained from a pair of bases $B_1,B_2$ by a symmetric exchange. That is, $D_1=(B_1\setminus e)\cup f$ and $D_2=(B_2\setminus f)\cup e$ for some $e\in B_1$ and $f\in B_2$. Then we say that the quadratic binomial $y_{B_1}y_{B_2}-y_{D_1}y_{D_2}$ \emph{corresponds to symmetric exchange}. It is clear that such binomials belong to the ideal $I_M$. White conjectured that they generate this ideal.

\begin{conjecture}[White 1980, \cite{Wh1}]\label{ConjectureWhite}
For every matroid $M$ its toric ideal $I_M$ is generated by quadratic binomials corresponding to symmetric exchanges. 
\end{conjecture}

Since every toric ideal is generated by binomials it is not hard to rephrase the above conjecture in the combinatorial language. It asserts that if two multisets of bases of a matroid have equal union (as a multiset), then one can pass between them by a sequence of symmetric exchanges. In fact this is the original formulation due to White. We immediately see that the conjecture does not depend on the field $\K$.

The most significant partial result is due to Blasiak \cite{Bl}, who confirmed the conjecture for graphical matroids. Kashiwabara \cite{Ka} checked the case of matroids of rank at most $3$. Schweig \cite{Sc} proved the case of lattice path matroids, which are a subclass of transversal matroids. Recently, Bonin \cite{Bo} confirmed the conjecture for sparse paving matroids.\smallskip

A matroid is \emph{strongly base orderable} if for any two bases $B_1$ and $B_2$ there is a bijection $\pi:B_1\rightarrow B_2$ satisfying the multiple symmetric exchange property, that is: $(B_1\setminus A)\cup\pi(A)$ is a basis for every $A\subset B_1$. This implies that $\pi$ restricted to the intersection $B_1\cap B_2$ is the identity. Moreover, $(B_2\setminus\pi(A))\cup A$ is a basis for every $A\subset B_1$ (by the multiple symmetric exchange property for $B_1\setminus A$). The class of strongly base orderable matroids is closed under taking minors. It is a large class of matroids, characterized by a matroid property instead of a specific presentation, contrary to the case of graphical, transversal or lattice path matroids. 

We prove White's conjecture for strongly base orderable matroids. As a consequence it is true for gammoids (every gammoid is strongly base orderable \cite{Sch}), and in particular for transversal matroids (every transversal matroid is a gammoid \cite{Ox}). So far, for transversal matroids, it was known only that the toric ideal $I_M$ is generated by quadratic binomials \cite{Co}.

\begin{theorem}\label{TheoremMain1}
If $M$ is a strongly base orderable matroid, then the toric ideal $I_M$ is generated by quadratic binomials corresponding to symmetric exchanges. 
\end{theorem}

Our argument uses an idea from the proof presented in \cite{Sch} of a theorem of Davies and McDiarmid \cite{DaMc}. Suppose two strongly base orderable matroids on the ground set $E$ have the same rank. The theorem of Davies and McDiarmid asserts that if $E$ can be partitioned into bases in each matroid, then there exists also a common partition.\smallskip

Let $\mathfrak{m}$ be the ideal generated by all variables in the polynomial ring $S_M$ (so-called \emph{irrelevant ideal}). Recall that $I:\mathfrak{m}^{\infty}=\{a\in S_M:a\mathfrak{m}^n\subset I\text{ for some }n\in\N\}$ is called the \emph{saturation} of an ideal $I$ with respect to the ideal $\mathfrak{m}$. Let $J_M$ be the ideal generated by quadratic binomials corresponding to symmetric exchanges. Clearly, $J_M\subset I_M$ and White's conjecture asserts that the ideals $J_M$ and $I_M$ are equal. We prove for arbitrary matroid $M$ that the ideals $J_M$ and $I_M$ are equal up to saturation with respect to the irrelevant ideal $\mathfrak{m}$. In fact the ideal $I_M$, as a prime ideal, is saturated $I_M:\mathfrak{m}^\infty=I_M$. 

Ideals are central objects of commutative algebra. From the point of view of algebraic geometry one is interested in schemes defined by them. A homogeneous ideal ($I_M$ and $J_M$ are homogeneous) defines two schemes -- affine and projective (we refer the reader to \cite{Fu,CoLiSc} for background of toric geometry). Two ideals define the same affine scheme if and only if they are equal. Thus White's conjecture asserts equality of affine schemes defined by $I_M$ and $J_M$. Homogeneous ideals define the same projective scheme if and only if their saturations with respect to the irrelevant ideal are equal. Thus we prove equality of projective schemes defined by $I_M$ and $J_M$. More information on distinctions between sets and affine or projective schemes in the case of toric varieties can be found in the last part of Section $4$ and in Section $5$ of \cite{Mi}. 

The projective toric variety $\proj(S_M/I_M)$ has been studied before (see \cite{GeGoMaSe,KaStZe}). It is often required that a projective toric variety is normal. Indeed, White proved the stronger property that the variety $\proj(S_M/I_M)$ is projectively normal \cite{Wh2}. 

\begin{theorem}\label{TheoremMain2}
White's conjecture is true up to saturation. That is, for every matroid $M$ we have $J_M:\mathfrak{m}^{\infty}=I_M$. In other words the projective schemes $\proj(S_M/I_M)$ and $\proj(S_M/J_M)$ are equal.
\end{theorem}

As a corollary we get that both ideals have equal radicals and the same affine set of zeros (since both $I_M$ and $J_M$ are contained in $\mathfrak{m}$). Moreover, it follows that in order to prove White's conjecture it is enough to show that the ideal $J_M$ is saturated, radical or prime.\smallskip

Conjecture \ref{ConjectureWhite} is an algebraic reformulation (cf. \cite{St2}) of the original conjecture due to White expressed in the combinatorial language. Actually, White stated three conjectures of growing difficulty. In the algebraic language the weakest asserts that the toric ideal $I_M$ is generated by quadratic binomials. The second one is Conjecture \ref{ConjectureWhite}, and the most difficult is an analog of Conjecture \ref{ConjectureWhite} for the noncommutative polynomial ring $S_M$. We discuss them in details in the last section. We prove that Conjecture \ref{ConjectureWhite} holds for the direct sum $M\oplus M$ if and only if its noncommutative version holds for $M$. In particular we get that the strongest version holds for all strongly base orderable, graphical, and cographical matroids. We mention also how to extend Theorems \ref{TheoremMain1} and \ref{TheoremMain2} to discrete polymatroids. 

\section{White's conjecture for strongly base orderable matroids}

\begin{proofof1}
Recall that $J_M$ is the ideal generated by quadratic binomials corresponding to symmetric exchanges. 
The ideal $I_M$, as a toric ideal, is generated by binomials. Thus it is enough to prove that all binomials of $I_M$ belong to the ideal $J_M$.

Fix $n\geq 2$. We are going to show by decreasing induction on the overlap function
$$d(y_{B_1}\cdots y_{B_n},y_{D_1}\cdots y_{D_n}):=\max_{\pi\in S_n}\sum_{i=1}^n\left\vert B_i\cap D_{\pi(i)}\right\vert$$ 
that a binomial $y_{B_1}\cdots y_{B_n}-y_{D_1}\cdots y_{D_n}\in I_M$ belongs to $J_M$. Clearly, the biggest possible value of $d$ is $r(M)n$, where $r(M)$ denotes the rank of matroid $M$.

If $d(y_{B_1}\cdots y_{B_n},y_{D_1}\cdots y_{D_n})=r(M)n$, then there exists a permutation $\pi\in S_n$ such that $B_i=D_{\pi(i)}$ for each $i$. Hence $y_{B_1}\cdots y_{B_n}-y_{D_1}\cdots y_{D_n}=0\in J_M$.

Suppose the assertion holds for all binomials with the overlap function greater than $d<r(M)n$. Let $y_{B_1}\cdots y_{B_n}-y_{D_1}\cdots y_{D_n}$ be a binomial of $I_M$ with the overlap function equal to $d$. Without loss of generality we can assume that the identity permutation realizes the maximum in the definition of the overlap function. Then for some $i$ there exists $e\in B_i\setminus D_i$. Clearly, $y_{B_1}\cdots y_{B_n}-y_{D_1}\cdots y_{D_n}\in I_M$ if and only if $B_1\cup\dots\cup B_n=D_1\cup\dots\cup D_n$ as multisets. Thus there exists $j\neq i$ such that $e\in D_j\setminus B_j$. Without loss of generality we can assume that $i=1,j=2$. Since $M$ is a strongly base orderable matroid, there exist bijections $\pi_B:B_1\rightarrow B_2$ and $\pi_D:D_1\rightarrow D_2$ with the multiple symmetric exchange property. Recall that $\pi_B$ is the identity on $B_1\cap B_2$, and similarly that $\pi_D$ is the identity on $D_1\cap D_2$.

Let $G$ be a graph on a vertex set $B_1\cup B_2\cup D_1\cup D_2$ with edges $\{b,\pi_B(b)\}$ for all $b\in B_1\setminus B_2$ and $\{d,\pi_D(d)\}$ for all $d\in D_1\setminus D_2$. Graph $G$ is bipartite since it is a sum of two matchings. Split the vertex set of $G$ into two independent (in the graph sense) sets $S$ and $T$. Define:
$$B'_1=(S\cap (B_1\cup B_2))\cup (B_1\cap B_2),\;B'_2=(T\cap (B_1\cup B_2))\cup (B_1\cap B_2),$$
$$D'_1=(S\cap (D_1\cup D_2))\cup (D_1\cap D_2),\;D'_2=(T\cap (D_1\cup D_2))\cup (D_1\cap D_2).$$
By the multiple symmetric exchange property of $\pi_B$ sets $B'_1,B'_2$ are bases obtained from the pair $B_1,B_2$ by a sequence of symmetric exchanges. Therefore the binomial $y_{B_1}y_{B_2}y_{B_3}\cdots y_{B_n}-y_{B'_1}y_{B'_2}y_{B_3}\cdots y_{B_n}$ belongs to $J_M$. Analogously the binomial $y_{D_1}y_{D_2}y_{D_3}\cdots y_{D_n}-y_{D'_1}y_{D'_2}y_{D_3}\cdots y_{D_n}$ belongs to $J_M$. Moreover, since $S$ and $T$ are disjoint we have that 
$$d(y_{B'_1}y_{B'_2}y_{B_3}\cdots  y_{B_n},y_{D'_1}y_{D'_2}y_{D_3}\cdots  y_{D_n})>d(y_{B_1}y_{B_2}\cdots  y_{B_n},y_{D_1}y_{D_2}\cdots  y_{D_n}).$$ By the inductive assumption $y_{B'_1}y_{B'_2}y_{B_3}\cdots y_{B_n}-y_{D'_1}y_{D'_2}y_{D_3}\cdots y_{D_n}$ also belongs to $J_M$. By adding the first and the third and subtracting the second of the above binomials we get the inductive assertion. \kwadrat
\end{proofof1}

\section{Projective scheme-theoretic version of White's conjecture for arbitrary matroids}

\begin{proofof2}
Since $J_M\subset I_M$ we get that $J_M:\mathfrak{m}^\infty\subset I_M:\mathfrak{m}^\infty=I_M$. 

We prove the opposite inclusion $I_M\subset J_M:\mathfrak{m}^\infty$. As $I_M$ is toric, it is enough to prove that any binomial $y_{B_1}\dots y_{B_n}-y_{D_1}\dots y_{D_n}\in I_M$ belongs to $J_M:\mathfrak{m}^\infty$. Hence it is enough to show that for each basis $B\in\mathfrak{B}$ we have
$$y_B^{(r(M)-1)n}(y_{B_1}\cdots y_{B_n}-y_{D_1}\cdots y_{D_n})\in J_M,$$ 
since then 
$$(y_{B_1}\cdots y_{B_n}-y_{D_1}\cdots y_{D_n})\mathfrak{m}^{(r(M)-1)n\left\vert\mathfrak{B}\right\vert}\subset J_M.$$ 

Let $B\in\mathfrak{B}$ be a basis. The polynomial ring $S_M$ has a natural grading given by the degree function $\deg(y_{B'})=1$, for each variable $y_{B'}$. We define the \emph{$B$-degree} by $\deg_B(y_{B'})=\left\vert B'\setminus B\right\vert$, and extend this notion also to bases $\deg_B(B')=\left\vert B'\setminus B\right\vert$. Notice that the ideal $I_M$ is homogeneous with respect to both gradings. Additionally $B$-degree of $y_B$ is zero, thus multiplying by $y_B$ does not change $B$-degree of a polynomial.  Observe that if $\deg_B(B')=1$, then $B'$ differs from $B$ only by a single element. We call such a basis, and the corresponding variable, \emph{balanced}. A monomial or a binomial is called \emph{balanced} if all its variables are balanced.

We will prove by induction on the $B$-degree of a binomial the following claim. As argued before this will finish the proof.

\begin{claim}\label{Claim1} If $b\in I_M$ is a binomial, then $y_B^{\deg_B(b)-\deg(b)}b\in J_M$.
\end{claim} 

If $\deg_B(b)-\deg(b)<0$, then by $y_B^{\deg_B(b)-\deg(b)}b\in J_M$ we mean that $y_B^{\deg(b)-\deg_B(b)}$ divides $b$, and the quotient belongs to $J_M$. 

Let $n=\deg_B(b)$. If $n=0$, then the claim is obvious, since $0$ is the only binomial in $I_M$ with $B$-degree equal to $0$. Suppose $n>0$. As we would like to work with balanced variables, we begin with the following lemma.  

\begin{lemma}\label{LemmaCommon}
For every basis $B'\in\mathfrak{B}$ there exist balanced bases $B_1,\dots,B_{\deg_B(B')}$ such that $$y_B^{\deg_B(B')-1}y_{B'}-y_{B_1}\cdots y_{B_{\deg_B(B')}}\in J_M.$$
\end{lemma}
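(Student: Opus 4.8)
The plan is to prove the lemma by induction on $k:=\deg_B(B')=\lvert B'\setminus B\rvert$. The base case $k=1$ is immediate: then $B'$ is itself balanced, and the asserted relation is $y_{B'}-y_{B'}=0\in J_M$ (take the single balanced basis to be $B'$). All the content is in the inductive step, where the strategy is to peel off one element of $B'\setminus B$ by a single well-chosen double swap, producing one balanced basis and replacing $B'$ by a basis of strictly smaller $B$-degree, to which the inductive hypothesis applies.

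For the inductive step assume $k\geq 2$. First I would fix an element $e\in B'\setminus B$ and apply Brualdi's symmetric exchange property to the pair of bases $B',B$ at $e$. This produces $f\in B\setminus B'$ such that both
$$B_1:=B\cup e\setminus f\qquad\text{and}\qquad B'':=B'\cup f\setminus e$$
are bases. Two bookkeeping facts drive the argument. First, $B_1$ is balanced: since $f\in B$ and $e\notin B$, we have $\deg_B(B_1)=\lvert\{e\}\rvert=1$. Second, the $B$-degree drops by exactly one, $\deg_B(B'')=\lvert(B'\setminus\{e\})\setminus B\rvert=k-1$, because we removed the off-$B$ element $e$ from $B'$ and put back the element $f\in B$. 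Reading $B_1=B\cup e\setminus f$ and $B''=B'\cup f\setminus e$ as a double swap of the pair $B,B'$ (adding $e\in B'$ to $B$ and $f\in B$ to $B'$), the quadratic binomial $y_By_{B'}-y_{B_1}y_{B''}$ is by definition a generator of $J_M$.

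Now I would assemble the pieces, using that $J_M$ is an ideal so that multiplication by a monomial preserves membership. Multiplying the generator by $y_B^{k-2}$ gives
$$y_B^{k-1}y_{B'}-y_B^{k-2}y_{B_1}y_{B''}\in J_M.$$
Since $\deg_B(B'')=k-1$, the inductive hypothesis supplies balanced bases $B_2,\dots,B_k$ with $y_B^{k-2}y_{B''}-y_{B_2}\cdots y_{B_k}\in J_M$; multiplying this relation by $y_{B_1}$ yields
$$y_B^{k-2}y_{B_1}y_{B''}-y_{B_1}y_{B_2}\cdots y_{B_k}\in J_M.$$
Adding the two displayed relations cancels the middle term and gives $y_B^{k-1}y_{B'}-y_{B_1}\cdots y_{B_k}\in J_M$ with $B_1,\dots,B_k$ all balanced, which is exactly the claim.

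I do not anticipate a serious obstacle: the argument is a clean induction whose engine is a single application of the symmetric exchange property at each step. The only points demanding care are the two degree computations — verifying that $B_1$ is balanced and that passing from $B'$ to $B''$ lowers the $B$-degree by precisely one — and the routine check that telescoping the double-swap generators, after multiplication by the monomials $y_B^{k-2}$ and $y_{B_1}$, stays inside the ideal $J_M$.
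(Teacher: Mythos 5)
Your proposal is correct and follows essentially the same argument as the paper: induction on $\deg_B(B')$, peeling off one element of $B'\setminus B$ via a symmetric exchange to produce the balanced basis $B_1=B\cup e\setminus f$ and the lower-degree basis $B''=B'\cup f\setminus e$, then telescoping the generator $y_By_{B'}-y_{B_1}y_{B''}$ against the inductive relation after multiplying by $y_B^{k-2}$ and $y_{B_1}$. The degree computations and the algebraic assembly match the paper's proof exactly.
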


\begin{proof} 
The proof goes by induction on $\deg_B(B')$. If $\deg_B(B')=0,1$, then the assertion is clear. Suppose that $\deg_B(B')>1$. From the symmetric exchange property for $e\in B'\setminus B$ there exists $f\in B\setminus B'$ such that both $B_1=(B\setminus f)\cup e$ and $B''=(B'\setminus e)\cup f$ are bases. Now $\deg_B(B_1)=1$ and $\deg_B(B'')=\deg_B(B')-1$. Applying the inductive assumption to $B''$ we obtain balanced bases $B_2,\dots,B_{\deg_B(B')}$ satisfying $$y_B^{\deg_B(B')-2}y_{B''}-y_{B_2}\cdots y_{B_{\deg_B(B')}}\in J_M.$$
Hence, since $y_By_{B'}-y_{B_1}y_{B''}$ corresponds to symmetric exchange, we get
$$y_B^{\deg_B(B')-1}y_{B'}-y_{B_1}\cdots y_{B_{\deg_B(B')}}=y_B^{\deg_B(B')-2}\left(y_By_{B'}-y_{B_1}y_{B''}\right)+$$ $$+y_{B_1}\left(y_B^{\deg_B(B')-2}y_{B''}-y_{B_2}\cdots y_{B_{\deg_B(B')}}\right)\in J_M.$$
\end{proof}

Lemma \ref{LemmaCommon} allows us to replace each factor $y_B^{\deg_B(y_{B'})-\deg(y_{B'})}y_{B'}$ of a monomial $y_B^{\deg_B(m)-\deg(m)}m$ by a product of balanced variables (modulo the ideal $J_M$). Notice that the $B$-degree is preserved. Hence for binomials of fixed $B$-degree equal to $n$ Claim \ref{Claim1} is equivalent to the following one.

\begin{claim}\label{Claim2} If $b=y_{B_1}\cdots y_{B_n}-y_{D_1}\cdots y_{D_n}\in I_M$ is a balanced binomial, then $b\in J_M$.
\end{claim} 

With a balanced monomial $m=y_{B_1}\cdots y_{B_n}$ we associate a bipartite multigraph $G(m)$. The vertex classes of $G(m)$ are $B$ and $E\setminus B$ (where $E$ is the ground set of matroid $M$). Each edge corresponds to a variable $y_{B_i}$ of the monomial $m$. Namely, if $B_i=(B\setminus f)\cup e$ for some $f\in B, e\in E\setminus B$ we put an edge $\{e,f\}$ in $G(m)$. In this way $G(m)$ is a multigraph with $\deg(m)$ edges.

Let $b=y_{B_1}\cdots y_{B_n}-y_{D_1}\cdots y_{D_n}\in I_M$ be a balanced binomial of $B$-degree equal to $n$. Observe that $b$ belongs to $I_M$ if and only if each vertex from $E$ has the same degree with respect to graphs $G(y_{B_1}\cdots y_{B_n})$ and $G(y_{D_1}\cdots y_{D_n})$. Thus we can apply the following lemma (we leave the proof as an easy exercise).

\begin{lemma}
Let $G$ and $H$ be bipartite multigraphs with the same vertex classes. Suppose that each vertex has the same degree with respect to $G$ and $H$. Then the symmetric difference of multisets of edges of $G$ and $H$ can be partitioned into alternating cycles. That is simple cycles of even length with consecutive edges from different graphs.
\end{lemma}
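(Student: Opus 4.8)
The plan is to prove a purely graph-theoretic statement about two bipartite multigraphs $G$ and $H$ sharing the same vertex classes and the same degree sequence. I would first pass to the symmetric difference $G\triangle H$ of the edge multisets; by this I mean forming the multiset $F$ that contains each edge with multiplicity equal to the absolute difference of its multiplicities in $G$ and $H$, while recording for each retained copy whether it came from $G$ or from $H$. The crucial first observation is that the degree condition is preserved under the symmetric difference in the following bookkeeping sense: at every vertex $v$, the number of retained edges originating from $G$ equals the number originating from $H$. Indeed, cancelling the common edges removes the same amount from the $G$-count and the $H$-count at each endpoint, so the equality $\deg_G(v)=\deg_H(v)$ descends to an equality of these two ``colored'' degrees in $F$.

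The main step is then to exhibit the claimed partition into alternating even cycles. I would model $F$ as an edge-colored multigraph where edges inherited from $G$ get one color and edges from $H$ get the other, and then argue that this colored multigraph decomposes into alternating closed walks. The standard route is an Eulerian-type argument: because at every vertex the number of incident $G$-edges equals the number of incident $H$-edges, one can repeatedly trace a walk that alternates colors, leaving each vertex along an edge of the opposite color to the one used to enter. Such a walk can always be continued until it returns to a vertex it has visited, at which point it closes up into a cycle. Since the walk alternates colors at each step, the closed cycle it produces has even length and the property that consecutive edges lie in different graphs. Removing this alternating cycle from $F$ preserves the balanced colored-degree condition at every vertex, so one continues inductively on the number of edges of $F$ until nothing remains, yielding the desired partition.

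Two technical points deserve care, and I expect the second to be the main obstacle. First, bipartiteness already forces every closed walk to have even length, so the ``even'' conclusion is automatic once we know the cycles are closed walks in a bipartite graph; the genuine content is the color-alternation. Second, and more delicate, is ensuring that the alternating walk can in fact be \emph{closed} and that the object extracted is a genuine cycle (or at least a closed alternating walk decomposable into cycles) rather than merely a path that gets stuck. The balance of colored degrees at each vertex is exactly what guarantees that whenever the walk enters a vertex along, say, a $G$-edge, there remains an unused $H$-edge at that vertex to leave by, unless the walk has returned to its starting vertex. Making this continuation argument precise, and handling multi-edges correctly so that a repeated edge of $F$ is treated as distinct copies, is where I would spend the most effort. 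The author flags this lemma as ``obvious from graph theory,'' and indeed the cleanest write-up is to invoke the classical fact that a connected multigraph with all even degrees is Eulerian, applied after reformulating color-alternation as an ordinary Eulerian condition on a suitably constructed auxiliary graph.
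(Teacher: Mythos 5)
The paper offers no proof of this lemma at all --- it is invoked with the words ``obvious from graph theory'' --- so there is nothing to compare your argument against except its own correctness, and your argument is the standard one and is sound. The colored-degree bookkeeping on the symmetric difference, the continuation argument for the alternating walk (after entering a vertex $w$ by a $G$-edge there is always an unused $H$-edge to leave by, except at the start vertex after returning along the color opposite to the one used to depart, at which point the walk closes up with the alternation intact), and the induction on the number of remaining edges together give the decomposition into closed alternating walks. The one point you explicitly leave open --- whether the extracted object can be taken to be a genuine alternating \emph{cycle} rather than a closed alternating walk with repeated vertices --- is settled by bipartiteness, the very feature you invoke for a different purpose: if the closed walk revisits a vertex, choose a minimal closed sub-walk between two occurrences of a repeated vertex; it is a simple cycle, its length is even because the graph is bipartite, so its two edges meeting at the repeated vertex automatically carry different colors (colors alternate along an even number of edges), hence it is itself an alternating even cycle, and splicing it out leaves a strictly shorter closed alternating walk. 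Iterating yields the claimed partition into alternating even cycles, completing your argument.
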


We choose one alternating cycle, and denote its consecutive vertices by \linebreak $f_1,e_1,f_2,e_2,\dots,f_r,e_r,f_1$. For each $i\in\Z/r\Z$ the sets $B'_i=(B\setminus f_i)\cup e_i$ and $D'_i=(B\setminus f_i)\cup e_{i-1}$ are bases. Notice that $y_{B'_1}\cdots y_{B'_r}$ divides $y_{B_1}\cdots y_{B_n}$, let $m_1$ be the quotient. Analogously let $m_2$ be the quotient of $y_{D_1}\cdots y_{D_n}$ by $y_{D'_1}\cdots y_{D'_r}$. 

Suppose $r<n$. Then the balanced binomial $b'=y_{B'_1}\cdots y_{B'_r}-y_{D'_1}\cdots y_{D'_r}$ belongs to $I_M$ and has $B$-degree less than $n$. From the inductive assumption we get that $b'\in J_M$. Observe that
$$b=y_{B_1}\cdots y_{B_n}-y_{D_1}\cdots y_{D_n}=m_1b'-y_{D'_1}\cdots y_{D'_r}(m_2-m_1)$$
and $m_2-m_1\in I_M$. The balanced binomial $b''=m_2-m_1\in I_M$ has $B$-degree less than $n$. By the inductive assumption $b''\in J_M$, and as a consequence $b\in J_M$.

Suppose now that $r=n$. We can assume that $E=\{f_1,e_1\dots,f_n,e_n\}$, since otherwise we can contract $B\setminus\{f_1,\dots,f_n\}$ and restrict our matroid to the set $\{f_1,e_1\dots,f_n,e_n\}$. Obviously the assertion of the claim extends from such a minor to the matroid.

We say that a monomial $m_3$ is \emph{achievable} from a monomial $m_4$ if $m_3-m_4\in J_M$. In this situation we say also that variables of $m_3$ are \emph{achievable} from $m_4$. Observe that if there is a variable different from $y_B$ that is achievable from both monomials $y_{B_1}\cdots y_{B_n}$ and $y_{D_1}\cdots y_{D_n}$, then the assertion follows by induction. Indeed, if a variable $y_{B'}$ is achievable from both, then there exist monomials $m_5,m_6$ such that $$b=(y_{B_1}\cdots y_{B_n}-y_{B'}m_5)+(y_{B'}m_6-y_{D_1}\cdots y_{D_n})+y_{B'}(m_5-m_6).$$
The binomial $b'=m_5-m_6\in I_M$ has $B$-degree less than $n$, thus by the inductive assumption $y_B^{\deg_B(b')-\deg(b')}b'\in J_M$. Hence $b'\in J_M$ because $$\deg_B(b')-\deg(b')=\deg_B(b)-\deg(b)-\deg_B(y_{B'})+\deg(y_{B'})\leq 0.$$

Suppose contrary -- no variable different from $y_B$ is achievable from both monomials of $b$. We will exclude this case by reaching a contradiction. For $k,i\in\Z/n\Z$ we define:
$$S_k^i:=B\cup\{e_{k},e_{k+1},\dots,e_{k+i-1}\}\setminus\{f_k,f_{k+1},\dots,f_{k+i-1}\},$$
$$T_k^i:=B\cup\{e_{k-1},e_{k},\dots,e_{k+i-2}\}\setminus\{f_k,f_{k+1},\dots,f_{k+i-1}\},$$
$$U_k^i:=B\cup\{e_{k-i}\}\setminus\{f_k\}.$$
The sets $S_k^i$ and $T_k^i$ differ only on the set $\{e_1,\dots,e_n\}$ by a shift by one. Notice that $S_k^1=U_k^0=B'_k$, $T_k^1=U_k^1=D'_k$ and $S_k^n=T_{k'}^n$ for arbitrary $k,k'\in\Z/n\Z$. Hence
$m_7:=y_{B'_1}\cdots y_{B'_n}=y_{S_1^1}\cdots y_{S_n^1}$ and $m_8:=y_{D'_1}\cdots y_{D'_n}=y_{T_1^1}\cdots y_{T_n^1}$ are the monomials of $b$, that is $b=m_7-m_8$.

\begin{lemma}\label{LemmaNieMaStrzalek}
Suppose that for a fixed $0<i<n$ and every $k\in\Z/n\Z$ the following conditions are satisfied:
\begin{enumerate}
\item the sets $S_k^{i}$ and $T_k^{i}$ are bases,
\item the monomial $y_B^{i-1}y_{S_k^{i}}\prod_{j\neq k,\dots,k+i-1} y_{S_j^{1}}$ is achievable from $m_7$,
\item the monomial $y_B^{i-1}y_{T_k^i}\prod_{j\neq k,\dots,k+i-1} y_{T_j^1}$ is achievable from $m_8$.
\end{enumerate}
Then for every $k\in\Z/n\Z$ neither of the sets $U_k^{-i}$, $U_k^{i+1}$ is a basis.
\end{lemma}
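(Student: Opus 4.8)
The plan is to argue by contradiction, exploiting the assumption that no variable other than $y_B$ is achievable from both monomials of $b$. Suppose toward a contradiction that one of the sets $U_k^{-i}$ or $U_k^{i+1}$ is a basis. The idea is that such a basis would give us the raw material for an extra double swap that produces a common achievable variable, contradicting our standing assumption. So I would take the hypotheses (1)--(3) as the inductive/structural input: we already know that the ``shifted'' bases $S_k^i$ and $T_k^i$ are bases, and that the monomials listed in (2) and (3) are achievable from $m_1$ and $m_2$ respectively. These monomials are nearly identical: they agree in all the factors $y_{S_j^1}=y_{B_j'}$ versus $y_{T_j^1}=y_{D_j'}$ except on the block of indices $k,\dots,k+i-1$, where one carries $y_{S_k^i}$ and the other $y_{T_k^i}$, padded by powers of $y_B$.

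First I would look at the two singleton bases $U_k^{-i}=B\cup e_{k+i}\setminus f_k$ (reading the index cyclically) and $U_k^{i+1}=B\cup e_{k-i-1}\setminus f_k$; these are exactly the balanced bases one would obtain by recombining the ends of the block $\{e_{k-1},\dots\}$ and $\{f_k,\dots,f_{k+i-1}\}$ with a single element moved one step further than the shift already performed. The key step is to perform a single double swap on the pair of bases witnessing achievability. Concretely, starting from the achievable monomial in (2), I would try to swap the element $f_k$ (or the matching $e$-element) between $S_k^i$ and an adjacent balanced factor $y_{S_{k+i}^1}=y_{B_{k+i}'}$, using the symmetric exchange property; if $U_k^{i+1}$ (or $U_k^{-i}$) is a basis, this double swap is legal and transforms the achievable monomial from $m_1$ into one containing the variable $y_{U_k^{i+1}}$. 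Symmetrically, starting from the achievable monomial in (3), the corresponding single double swap applied to $y_{T_k^i}$ and its neighboring factor $y_{T_j^1}=y_{D_j'}$ would produce, from $m_2$, a monomial also containing $y_{U_k^{i+1}}$ (or $y_{U_k^{-i}}$). Because achievability is transitive (if $m_1-m_2'\in J_M$ and $m_2'-m_3'\in J_M$ then $m_1-m_3'\in J_M$, as $J_M$ is an ideal), this exhibits the single variable $y_{U_k^{i+1}}$ (respectively $y_{U_k^{-i}}$) as achievable from \emph{both} $m_1$ and $m_2$. Since $U_k^{i+1}\neq B$ (it has $B$-degree $1$), this contradicts the standing assumption, and hence neither $U_k^{-i}$ nor $U_k^{i+1}$ can be a basis.

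The main obstacle I anticipate is bookkeeping the double swaps precisely: I must verify that the element being exchanged and its partner are chosen so that (a) the swap is a genuine double swap, meaning \emph{both} resulting sets are bases, and (b) the two sets produced on the $m_1$ side and the $m_2$ side land on the \emph{same} new variable $y_{U_k^{i+1}}$. This requires carefully matching the index arithmetic modulo $n$ in the definitions of $S_k^i$, $T_k^i$, and $U_k^i$, and using the symmetric exchange property to certify that when $U_k^{i+1}$ is a basis, the complementary set formed in the double swap (the $S^{i+1}$- or $T^{i+1}$-type set, with the moved element removed from the block) is also a basis. The cyclic numeration and the two cases $U_k^{-i}$ versus $U_k^{i+1}$ are symmetric under reflecting the roles of $S$ and $T$ (equivalently, reversing the direction of the cycle), so I expect to prove one case in full and obtain the other by this symmetry. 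Once the swaps are set up correctly, the contradiction is immediate from the definition of achievability and the standing assumption.
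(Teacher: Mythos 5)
Your overall strategy (contradiction via the standing assumption that no variable other than $y_B$ is achievable from both monomials, using the hypothetical basis $U_k^{\pm}$ to legitimize one extra double swap) is the right one, and it is the paper's strategy too. But the key step is misidentified: you claim that the double swaps performed on the $m_1$-side and on the $m_2$-side land on the \emph{same} variable $y_{U_k^{i+1}}$ (respectively $y_{U_k^{-i}}$), so that the $U$-variable itself is the commonly achievable one. That is not what the swaps produce. Compute the swap you describe on the $m_1$-side: exchanging $f_{k+i}\in S_k^i$ with $f_k\in S_{k+i}^1$ yields the pair $(T_{k+1}^i,\,U_k^{-i})$, i.e.\ the binomial $y_{S_k^i}y_{S_{k+i}^1}-y_{T_{k+1}^i}y_{U_k^{-i}}$ (legal in $J_M$ precisely when $U_k^{-i}$ is a basis, since $T_{k+1}^i$ is a basis by hypothesis (1)). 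The ``symmetric'' swap on the $m_2$-side, e.g.\ $y_{T_k^1}y_{T_{k+1}^i}-y_{S_k^i}y_{U_{k+i}^{i+1}}$, produces an $S$-type variable together with a \emph{different} $U$-variable with the opposite sign of shift, and its legality requires $U_{k+i}^{i+1}$ to be a basis --- a separate hypothesis. There is no single double swap that makes $y_{U_k^{-i}}$ (or $y_{U_k^{i+1}}$) appear on both sides, so your contradiction does not materialize as stated.

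The fix --- which is exactly the paper's proof --- is that you do not need a new swap on both sides: the commonly achievable variable is $y_{T_{k+1}^i}$ (for the $U_k^{-i}$ case), which is \emph{already} achievable from $m_2$ by hypothesis (3); the one swap $y_{S_k^1}y_{S_{k+1}^i}-y_{T_{k+1}^i}y_{U_k^{-i}}\in J_M$ on the $m_1$-side (legal once $U_k^{-i}$ is assumed to be a basis) makes it achievable from $m_1$ as well, and since $T_{k+1}^i\neq B$ this contradicts the standing assumption. Dually, if $U_k^{i+1}$ were a basis, one swap on the $m_2$-side makes an $S^i$-variable achievable from $m_2$, contradicting hypothesis (2). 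So the two halves of the conclusion are handled by swaps on \emph{opposite} sides, each targeting a variable of the other family; your proposal conflates them into a single $U$-variable reached from both sides, and that step would fail.
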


\begin{proof}
Suppose contrary that $U_k^{-i}$ is a basis. Then $y_{S_{k}^1}y_{S_{k+1}^i}-y_{T_{k+1}^i}y_{U_k^{-i}}$, by the definition, belongs to $J_M$. Thus, by the assumption, the variable $y_{T_{k+1}^i}$ would be achievable from both $m_7$ and $m_8$, which is a contradiction. The argument for $U_k^{i+1}$ is analogous.
\end{proof}

\begin{lemma}\label{LemmaSaNogi}
Suppose that for a fixed $0<i<n$ and every $k\in\Z/n\Z$ the following conditions are satisfied:
\begin{enumerate}
\item the set $S_k^{i}$ is a basis,
\item the monomial $y_B^{i-1}y_{S_k^{i}}\prod_{j\neq k,\dots,k+i-1}y_{S_j^{1}}$ is achievable from $m_7$,
\item the set $U_k^{-j}$ is not a basis for any $0<j\leq i$.
\end{enumerate}
Then for every $k\in\Z/n\Z$ the set $S_k^{i+1}$ is a basis and $y_B^{i}y_{S_k^{i+1}}\prod_{j\neq k,\dots,k+i}y_{S_j^{1}}$ is a monomial achievable from $m_7$.
\end{lemma}

\begin{proof}
From the symmetric exchange property for $e_{k}\in S_{k}^1\setminus S_{k+1}^i$ it follows that there exists $x\in S_{k+1}^i\setminus S_{k}^1$ such that $\tilde S_{k+1}^i=(S_{k+1}^i\setminus x)\cup e_{k}$ and $\tilde S_{k}^1=(S_{k}^1\setminus e_{k})\cup x$ are also bases. Thus $x\in\{f_{k},e_{k+1},e_{k+2},\dots,e_{k+i}\}$. Notice that if $x=e_{k+j}$ for some $j$, then $\tilde S_{k}^1=U_{k}^{-j}$ contradicting condition $(3)$. Thus $x=f_{k}$. Hence $\tilde S_{k+1}^i=S_{k}^{i+1}$ and $\tilde S_k^1=B$. In particular the binomial
$y_{S_{k+1}^i}y_{S_{k}^1}-y_By_{S_k^{i+1}}$ belongs to $J_M$ (condition $(1)$ guarantees that the variable $y_{S_{k+1}^i}$ exists). Thus the assertion follows from condition $(2)$.
\end{proof}

Analogously we get the following shifted version of Lemma \ref{LemmaSaNogi}.

\begin{lemma}\label{LemmaSaNogiSym}
Suppose that for a fixed $0<i<n$ and every $k\in\Z/n\Z$ the following conditions are satisfied:
\begin{enumerate}
\item the set $T_k^{i}$ is a basis,
\item the monomial $y_B^{i-1}y_{T_k^{i}}\prod_{j\neq k,\dots,k+i-1}y_{T_j^{1}}$ is achievable from $m_8$,
\item the set $U_k^{j+1}$ is not a base for any $0<j\leq i$.
\end{enumerate}
Then for every $k\in\Z/n\Z$ the set $T_k^{i+1}$ is a basis and $y_B^{i}y_{T_k^{i+1}}\prod_{j\neq k,\dots,k+i}y_{T_j^{1}}$ is a monomial achievable from $m_8$.
\end{lemma}

We are ready to reach a contradiction by an inductive argument. First we verify that for $i=1$ the assumptions of Lemma \ref{LemmaNieMaStrzalek} are satisfied. Suppose now that for some $1\leq i<n$ the assumptions of Lemma \ref{LemmaNieMaStrzalek} are satisfied for every $1\leq j\leq i$. Then, by Lemma \ref{LemmaNieMaStrzalek} the assumptions of both Lemma \ref{LemmaSaNogi} and Lemma \ref{LemmaSaNogiSym} are satisfied for every $1\leq j \leq i$. Thus by the assertions of Lemmas \ref{LemmaSaNogi} and \ref{LemmaSaNogiSym}, the assumptions of Lemma \ref{LemmaNieMaStrzalek} are satisfied for all $1\leq j\leq i+1$. We obtain that the assumptions and the assertions of Lemmas \ref{LemmaNieMaStrzalek}, \ref{LemmaSaNogi} and \ref{LemmaSaNogiSym} are satisfied for every $1\leq i<n$. For $i=n-1$ we get that the monomial $y_B^{n-1}y_{S_1^{n}}=y_B^{n-1}y_{T_1^{n}}$ is achievable from both $m_7$ and $m_8$, this gives a contradiction. \kwadrat
\end{proofof2}

\section{Remarks}\label{SectionRemarks}

We begin with the original formulation of conjectures stated by White in \cite{Wh1}.

Two sequences of bases $\mathcal{B}=(B_1,\dots,B_n)$ and $\mathcal{D}=(D_1,\dots,D_n)$ are \emph{compatible} if $B_1\cup\dots\cup B_n=D_1\cup\dots\cup D_n$ as multisets (that is if $y_{B_1}\cdots y_{B_n}-y_{D_1}\cdots y_{D_n}\in I_M$). White defines three equivalence relations. Two sequences of bases $\mathcal{B}$ and $\mathcal{D}$ of equal length are in relation:
\newline
$\sim_1$ if $\mathcal{D}$ may be obtained from $\mathcal{B}$ by a composition of symmetric exchanges. That is $\sim_1$ is the transitive closure of the relation which exchanges a pair of bases $B_i,B_j$ in a sequence into a pair obtained by a symmetric exchange.
\newline $\sim_2$ if $\mathcal{D}$ may be obtained from $\mathcal{B}$ by a composition of symmetric exchanges and permutations of the order of the bases.
\newline $\sim_3$ if $\mathcal{D}$ may be obtained from $\mathcal{B}$ by a composition of multiple symmetric exchanges. 

Let $TE(i)$ denote the class of matroids for which every two compatible sequences of bases $\mathcal{B},\mathcal{D}$ are in relation $\mathcal{B}\sim_i\mathcal{D}$ (the notion $TE(i)$ is the same as the original one in \cite{Wh1}). An algebraic meaning of the property $TE(3)$ is that the toric ideal $I_M$ is generated by quadratic binomials. A matroid $M$ belongs to $TE(2)$ if and only if the toric ideal $I_M$ is generated by quadratic binomials corresponding to symmetric exchanges. The property $TE(1)$ is an analog of $TE(2)$ for the noncommutative polynomial ring $S_M$. 

We are ready to formulate the original conjecture \cite[Conjecture 12]{Wh1} of White.

\begin{conjecture}\label{ConjectureOriginal}
The following equalities hold:
\begin{enumerate}
\item $TE(1)=$ the class of all matroids,
\item $TE(2)=$ the class of all matroids,
\item $TE(3)=$ the class of all matroids.
\end{enumerate}
\end{conjecture}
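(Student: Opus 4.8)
The plan is to first collapse the three assertions into one. Straight from the definitions, the relation $\sim_1$ refines $\sim_2$, which in turn refines $\sim_3$: a double swap is the special case of a multiple symmetric exchange with a single-element subset, and a transposition of two bases is itself realizable by a multiple symmetric exchange (take $A_1=B_1$, $A_2=B_2$), so permitting reorderings and then arbitrary multiple exchanges only enlarges each equivalence class. Hence $\mathcal{B}\sim_1\mathcal{D}$ implies $\mathcal{B}\sim_2\mathcal{D}$ implies $\mathcal{B}\sim_3\mathcal{D}$, giving $TE(1)\subseteq TE(2)\subseteq TE(3)$. Consequently part $(1)$ is the strongest assertion, and proving that every matroid lies in $TE(1)$ would formally yield parts $(2)$ and $(3)$.

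The structural core I would isolate is a reduction of the noncommutative property to the commutative one for a doubled matroid: namely that $M\in TE(1)$ if and only if $M\oplus M\in TE(2)$. The delicate point separating $\sim_1$ from $\sim_2$ is that $\sim_2$ is allowed to permute the order of the bases while $\sim_1$ is not, so a commutative White-type chain may freely reshuffle positions in a way that has no ordered counterpart. Doubling the matroid is designed to destroy exactly this freedom. Given a compatible ordered pair $\mathcal{B},\mathcal{D}$ of base sequences of $M$, I would lift each basis to a basis of $M\oplus M$ built from two disjoint copies of the ground set, using the second copy as a ledger that records the position of the basis in the sequence. Any $\sim_2$-chain upstairs is then forced, by the ledger entries, to match positions when it reorders, so that its restriction to the first copy descends to an ordered, i.e.\ $\sim_1$, chain downstairs, while each double swap upstairs restricts to a double swap (or a trivial move) downstairs. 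Carrying out this bookkeeping precisely---checking that the lifted sets are genuinely bases of $M\oplus M$ and that the positional tags cannot be scrambled---is the technical heart of the argument.

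With the reduction in hand, I would harvest the strongest property for every class that is closed under direct sums and already known to satisfy $TE(2)$. Strongly base orderable matroids are closed under direct sums (the witnessing bijection is built coordinatewise), so $M\oplus M$ is strongly base orderable whenever $M$ is; Theorem \ref{TheoremMain1} then gives $M\oplus M\in TE(2)$, whence $M\in TE(1)$. The identical scheme applies to graphical matroids through Blasiak's theorem, and to cographical matroids by the dual argument, settling all three parts of the conjecture on these subclasses. The genuine obstacle, and the reason parts $(1)$ and $(2)$ remain open in full, is the arbitrary matroid: the inductive double-swap argument behind Theorem \ref{TheoremMain1} relies on the existence of a single bijection with the multiple symmetric exchange property valid simultaneously for all subsets, a feature that characterizes strongly base orderable matroids and fails in general, and no replacement is known for controlling the overlap function on unstructured basis-exchange graphs.
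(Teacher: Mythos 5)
This statement is White's conjecture itself: the paper offers no proof of it, and it remains open, so there is no ``paper's own proof'' to compare against. Your write-up correctly does not claim to prove it either; what you outline instead reproduces, essentially verbatim in approach, the partial results of the paper's Remarks section --- the containments $TE(1)\subset TE(2)\subset TE(3)$, the reduction of $M\in TE(1)$ to $M\oplus M\in TE(2)$ via the doubled matroid (the paper's Proposition, which routes the ``ledger'' idea through Lemma \ref{Lemma12} by reducing to the single transposition $(B_1,B_2)\sim_1(B_2,B_1)$ rather than tagging every position of a length-$n$ sequence), and the resulting Corollary that strongly base orderable, graphical, and cographical matroids lie in $TE(1)$. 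Your final paragraph correctly identifies the genuine obstruction for arbitrary matroids, which is exactly where the paper also stops: it proves only the weaker scheme-theoretic statement (Theorem \ref{TheoremMain2}) in general. So the proposal is an accurate account of what is provable, takes the same route as the paper for those pieces, and honestly leaves the conjecture itself unresolved --- as it must.
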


Clearly, Conjecture \ref{ConjectureWhite} coincides with Conjecture \ref{ConjectureOriginal} $(2)$. It is straightforward \cite[Proposition 5]{Wh1} that: 
\begin{enumerate} 
\item $TE(1)\subset TE(2)\subset TE(3)$,
\item classes $TE(1),TE(2)$ and $TE(3)$ are closed under taking minors and dual,
\item classes $TE(1)$ and $TE(3)$ are closed under direct sum.
\end{enumerate}

White also claims that the class $TE(2)$ is closed under direct sum, however unfortunately there is a gap in his proof. We believe that it is an open question. Corollary \ref{Corollary12} will show some consequences of $TE(2)$ being closed under direct sum for the relation between classes $TE(1)$ and $TE(2)$. 

\begin{lemma}\label{Lemma12}
For any matroid $M$ the following conditions are equivalent:
\begin{enumerate}
\item $M\in TE(1)$,
\item $M\in TE(2)$ and for any two bases $(B_1,B_2)\sim_1 (B_2,B_1)$ holds.
\end{enumerate}
\end{lemma}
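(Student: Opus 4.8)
The plan is to prove both implications directly from the definitions of the relations $\sim_1$ and $\sim_2$. Recall that these relations differ only in that $\sim_2$ additionally permits permuting the order of the bases in a sequence, whereas $\sim_1$ allows only double swaps applied to pairs in fixed positions.

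First I would prove the easy direction $(1)\Rightarrow(2)$. If $M\in TE(1)$, then $TE(1)\subset TE(2)$ gives immediately $M\in TE(2)$. For the second part of $(2)$, observe that the sequences $(B_1,B_2)$ and $(B_2,B_1)$ are compatible, since they use the same multiset of elements; hence by the definition of $TE(1)$ applied with $n=2$ we get $(B_1,B_2)\sim_1(B_2,B_1)$, as required.

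The substance is in the reverse direction $(2)\Rightarrow(1)$. Here I would assume $M\in TE(2)$ together with the transposition hypothesis $(B_1,B_2)\sim_1(B_2,B_1)$, and show $M\in TE(1)$. Fix compatible sequences $\mathcal{B}=(B_1,\dots,B_n)$ and $\mathcal{D}=(D_1,\dots,D_n)$; the goal is $\mathcal{B}\sim_1\mathcal{D}$. Since $M\in TE(2)$ we already have $\mathcal{B}\sim_2\mathcal{D}$, so $\mathcal{D}$ is reachable from $\mathcal{B}$ by a composition of double swaps \emph{and} permutations of the order of the bases. The key step is to eliminate the permutations: I would argue that any reordering of a sequence can itself be realized, up to $\sim_1$, by double swaps alone. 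Since the symmetric group on the $n$ positions is generated by adjacent transpositions, it suffices to show that swapping two consecutive bases $B_i,B_{i+1}$ into $B_{i+1},B_i$ (leaving the other positions fixed) can be effected by a chain of double swaps, i.e. that $(\dots,B_i,B_{i+1},\dots)\sim_1(\dots,B_{i+1},B_i,\dots)$. This is exactly what the hypothesis $(B_1,B_2)\sim_1(B_2,B_1)$ supplies: applied to the pair in positions $i,i+1$, it gives a sequence of double swaps (each touching only those two positions) transforming $B_i,B_{i+1}$ into $B_{i+1},B_i$, and tensoring this chain with the identity on the remaining untouched positions yields a $\sim_1$-equivalence of the full length-$n$ sequences. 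Composing such adjacent-transposition moves, every permutation appearing in the $\sim_2$-derivation can be absorbed into $\sim_1$, so the entire derivation becomes a composition of double swaps, giving $\mathcal{B}\sim_1\mathcal{D}$.

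The main obstacle I anticipate is the bookkeeping in this last absorption step: one must check that applying the transposition hypothesis to an intermediate pair of bases is legitimate regardless of which bases currently occupy those two positions, and that interleaving these adjacent-transposition chains with the genuine double swaps coming from the $\sim_2$-derivation preserves $\sim_1$-equivalence throughout. Since $\sim_1$ is by definition the transitive closure of the single-pair double-swap relation and double swaps act locally on two coordinates, this interleaving is formally harmless, but stating it cleanly — that $\sim_1$ is a congruence compatible with acting on any fixed pair of positions — is the point that needs care.
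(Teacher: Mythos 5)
Your proposal is correct and follows essentially the same route as the paper, whose entire argument for $(2)\Rightarrow(1)$ is the observation that any permutation is a composition of transpositions, each of which is absorbed into $\sim_1$ via the hypothesis $(B_1,B_2)\sim_1(B_2,B_1)$. Your elaboration of the bookkeeping (that the transposition hypothesis applies to whichever bases occupy the two positions, and that such local chains compose with the genuine double swaps) is exactly the content the paper leaves implicit.
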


\begin{proof}
Implication $(1)\Rightarrow (2)$ is clear from the definition. To get the opposite implication it is enough to recall that any permutation is a composition of transpositions.
\end{proof}

\begin{proposition}
For any matroid $M$ the following conditions are equivalent:
\begin{enumerate}
\item $M\in TE(1)$,
\item $M\oplus M\in TE(1)$,
\item $M\oplus M\in TE(2)$.
\end{enumerate}
\end{proposition}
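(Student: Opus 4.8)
The plan is to prove the three equivalences $(1)\Rightarrow(2)\Rightarrow(3)\Rightarrow(1)$, exploiting the structural properties of the classes $TE(1)$ and $TE(2)$ already recorded, together with Lemma \ref{Lemma12}. The implication $(1)\Rightarrow(2)$ is immediate: we know $TE(1)$ is closed under direct sum (property $(3)$ in the list following Conjecture \ref{ConjectureOriginal}), so $M\in TE(1)$ forces $M\oplus M\in TE(1)$. The implication $(2)\Rightarrow(3)$ is also free, since $TE(1)\subset TE(2)$ by property $(1)$ of the same list. Thus the entire content of the proposition lies in the final implication $(3)\Rightarrow(1)$, and this is where I expect the real work to be.

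For the hard implication I would use Lemma \ref{Lemma12}, which reduces $M\in TE(1)$ to two conditions: $M\in TE(2)$, and $(B_1,B_2)\sim_1(B_2,B_1)$ for any two bases of $M$. The first of these, $M\in TE(2)$, should follow from $M\oplus M\in TE(2)$ by the fact that $TE(2)$ is closed under taking minors (property $(2)$): indeed $M$ is a minor of $M\oplus M$, obtained by restricting to one of the two summands (equivalently contracting a basis of the other copy). So the crux reduces to deriving the symmetric-swap condition $(B_1,B_2)\sim_1(B_2,B_1)$ in $M$ from the hypothesis $M\oplus M\in TE(2)$.

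The key idea for this step is to transport a swap inside $M\oplus M$ back to $M$. Write the two copies of the ground set so that a basis of $M\oplus M$ is a disjoint union $B\sqcup B'$ with $B$ a basis of the first copy and $B'$ a basis of the second. Given bases $B_1,B_2$ of $M$, consider in $M\oplus M$ the pair of bases $(B_1\sqcup B_2,\ B_2\sqcup B_1)$, where the first coordinate lives in the first copy and the second in the second copy. These two bases of $M\oplus M$ are compatible as sequences of length $2$, so by $M\oplus M\in TE(2)$ they are related by $\sim_2$, i.e.\ by a composition of double swaps and reorderings. The plan is to track these double swaps: because the two copies of the ground set are disjoint, any double swap in $M\oplus M$ either exchanges elements within the first copy or within the second copy, and projecting to a single copy yields double swaps in $M$ itself. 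Carefully bookkeeping the two coordinates, a $\sim_2$ relation turning $(B_1\sqcup B_2,\ B_2\sqcup B_1)$ into a reordering of itself should project to a $\sim_1$ relation realizing $(B_1,B_2)\sim_1(B_2,B_1)$ in $M$.

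The main obstacle I anticipate is precisely this projection argument: controlling how the permutations allowed in $\sim_2$ interact with the two summands, and ensuring that the reordering of bases in $M\oplus M$ corresponds, after projection, only to legitimate double swaps in $M$ rather than to an illegal permutation of the length-$2$ sequence in $M$. In other words, I must arrange the initial pair in $M\oplus M$ so that achieving the target configuration genuinely forces element exchanges (which project to swaps) rather than a mere swap of the two bases (which would not give new information in $M$). This is exactly the subtlety that, as the paper notes, caused the gap in White's claim that $TE(2)$ is closed under direct sum, so the coupling of the two copies via the crossed pair $(B_1\sqcup B_2,\ B_2\sqcup B_1)$ is the device I would rely on to force the swaps and extract the desired $\sim_1$ relation in $M$.
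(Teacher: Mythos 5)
Your treatment of $(1)\Rightarrow(2)$ and $(2)\Rightarrow(3)$ is correct and matches the paper, and your reduction of $(3)\Rightarrow(1)$ via Lemma \ref{Lemma12} to the two conditions ``$M\in TE(2)$'' and ``$(B_1,B_2)\sim_1(B_2,B_1)$'' is the right strategy; deducing $M\in TE(2)$ from minor-closedness of $TE(2)$ is a legitimate (and slightly slicker) substitute for the paper's explicit padding of each basis by a fixed basis on the second coordinate. However, the configuration you propose for the crucial swap condition does not work, and the obstacle you yourself flag is exactly what kills it. Writing $[B',B'']$ for the basis of $M\oplus M$ with $B'$ on the first copy and $B''$ on the second, your pair is the length-two sequence $\mathcal{P}=([B_1,B_2],[B_2,B_1])$ and your target is its reordering $\mathcal{Q}=([B_2,B_1],[B_1,B_2])$. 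But $\mathcal{Q}$ is obtained from $\mathcal{P}$ by the transposition of the two entries alone, so $\mathcal{P}\sim_2\mathcal{Q}$ holds trivially with \emph{no} double swaps; $TE(2)$ only guarantees that \emph{some} derivation exists, not that it avoids the permutation, so you cannot extract any $\sim_1$ information by projecting. Your ``crossed pair'' is precisely the configuration in which the illegal shortcut is available.

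The device that actually forces the swaps couples the two copies asymmetrically: take $\mathcal{B}'=([B_1,B_1],[B_2,B_2])$ and $\mathcal{D}'=([B_2,B_1],[B_1,B_2])$, so that the first coordinates of the target are swapped while the second coordinates are not. These are compatible, so $M\oplus M\in TE(2)$ gives a derivation by double swaps and permutations. Each double swap in $M\oplus M$ acts within a single copy, hence projects to a double swap (or the identity) on each coordinate. If the net permutation used in the derivation is the identity, project to the first copy: the first coordinates pass from $(B_1,B_2)$ to $(B_2,B_1)$ by double swaps alone. If the net permutation is the transposition, the double swaps carry $\mathcal{B}'$ to $([B_1,B_2],[B_2,B_1])$, and projecting to the \emph{second} copy shows $(B_1,B_2)\sim_1(B_2,B_1)$. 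Either way the hypothesis of Lemma \ref{Lemma12} is verified. Your write-up correctly anticipates that ``controlling how the permutations interact with the two summands'' is the main difficulty, but it does not supply this resolution, so as it stands the proof of $(3)\Rightarrow(1)$ is incomplete.
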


\begin{proof}
Implications $(1)\Rightarrow (2)\Rightarrow (3)$ were already discussed. To get $(3)\Rightarrow (1)$ suppose that a matroid $M$ satisfies $M\oplus M\in TE(2)$. By $[B',B'']$ we denote a basis of $M\oplus M$ consisting of a basis $B'$ of $M$ on the first copy and $B''$ on the second.

First we prove that $M\in TE(2)$. Let $\mathcal{B}=(B_1,\dots,B_n)$ and $\mathcal{D}=(D_1,\dots,D_n)$ be compatible sequences of bases of $M$. If $B$ is a basis of $M$, then $\mathcal{B'}=([B_1,B],\dots)$ and $\mathcal{D'}=([D_1,B],\dots)$ are compatible sequences of bases of $M\oplus M$. From the assumption we have $\mathcal{B'}\sim_2\mathcal{D'}$. Notice that any symmetric exchange in $M\oplus M$ restricted to the first coordinate is either trivial or a symmetric exchange. Thus, the same symmetric exchanges certify that $\mathcal{B}\sim_2\mathcal{D}$ in $M$. 

Due to Lemma \ref{Lemma12}, in order to prove $M\in TE(1)$ it is enough to show that for any two bases $B_1,B_2$ of $M$ the relation $(B_1,B_2)\sim_1 (B_2,B_1)$ holds. Sequences of bases $([B_1,B_1],[B_2,B_2])$ and $([B_2,B_1],[B_1,B_2])$ in $M\oplus M$ are compatible. Thus by the assumption one can be obtained from the other by a composition of symmetric exchanges and permutations. By the symmetry, without loss of generality we can assume that permutations are not needed. Now by projecting these symmetric exchanges to the first coordinate we get that $(B_1,B_2)\sim_1 (B_2,B_1)$ in $M$.
\end{proof}

As a corollary we obtain that for reasonable classes of matroids the `standard' version of White's conjecture is equivalent to the `strong' one.

\begin{corollary}\label{Corollary12}
If a class of matroids $\mathfrak{C}$ is closed under direct sums, then $\mathfrak{C}\subset TE(1)$ if and only if $\mathfrak{C}\subset TE(2)$. In particular:
\begin{enumerate}
\item strongly base orderable, graphical, cographical matroids belong to $TE(1)$, 
\item Conjectures \ref{ConjectureOriginal} (1) and (2) are equivalent,
\item the class $TE(2)$ is closed under direct sum if and only if $TE(1)=TE(2)$. 
\end{enumerate}
\end{corollary}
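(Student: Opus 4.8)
The plan is to deduce the entire corollary from the preceding Proposition, which already carries all the substance; the remaining work is purely formal and amounts to checking closure under direct sums for the relevant classes. First I would establish the stated equivalence. The forward implication $\mathfrak{C}\subset TE(1)\Rightarrow\mathfrak{C}\subset TE(2)$ is immediate from the inclusion $TE(1)\subset TE(2)$ recorded above. For the converse, suppose $\mathfrak{C}\subset TE(2)$ and that $\mathfrak{C}$ is closed under direct sums. Given any $M\in\mathfrak{C}$, closure under direct sums yields $M\oplus M\in\mathfrak{C}\subset TE(2)$; applying the implication $(3)\Rightarrow(1)$ of the Proposition then gives $M\in TE(1)$. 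Since $M$ was arbitrary, $\mathfrak{C}\subset TE(1)$.

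Next I would treat the three specific classes by the same template: verify closure under direct sums, exhibit membership in $TE(2)$, and invoke the equivalence just proved. The class of strongly base orderable matroids is closed under direct sums (combining the two witnessing bijections produces a witness for $M_1\oplus M_2$), and it lies in $TE(2)$ by Theorem \ref{TheoremMain1}. The class of graphical matroids is closed under direct sums (direct sum corresponds to disjoint union of graphs) and lies in $TE(2)$ by Blasiak's theorem. For cographical matroids I would use that $(M_1\oplus M_2)^*=M_1^*\oplus M_2^*$, so this class too is closed under direct sums, together with the fact that $TE(2)$ is closed under taking duals (recorded above), whence cographical matroids lie in $TE(2)$ because graphical ones do. In each case the equivalence gives membership in $TE(1)$.

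Finally, for the equivalence of Conjectures \ref{ConjectureOriginal} (1) and (2), I would apply the main equivalence to $\mathfrak{C}$ equal to the class of all matroids, which is trivially closed under direct sums. Since $TE(i)$ is always contained in the class of all matroids, the statement ``$TE(i)=$ the class of all matroids'' is equivalent to ``all matroids $\subset TE(i)$'', and hence Conjecture \ref{ConjectureOriginal} (1) holds if and only if Conjecture \ref{ConjectureOriginal} (2) holds.

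I do not anticipate a genuine obstacle here, as the Proposition does the heavy lifting. The only point that needs care is the verification that the class of strongly base orderable matroids is closed under direct sums, since the text explicitly records only closure under minors for that class; this gap is filled by noting that a basis of $M_1\oplus M_2$ splits as a basis of $M_1$ on one summand and a basis of $M_2$ on the other, so the bijections with the multiple symmetric exchange property on each summand glue to a single such bijection.
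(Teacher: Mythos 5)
Your proposal is correct and follows essentially the same route as the paper: the corollary is intended as a direct consequence of the preceding Proposition (via the implication $(3)\Rightarrow(1)$ applied to $M\oplus M$), combined with Theorem \ref{TheoremMain1}, Blasiak's result, closure of $TE(2)$ under duality, and the easy closure of the three classes under direct sums. Your explicit check that strongly base orderable matroids are closed under direct sums fills in a detail the paper leaves implicit, but it does not change the argument.
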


In the same way as we associate the toric ideal with a matroid one can associate a toric ideal $I_P$ with a discrete polymatroid $P$. Herzog and Hibi \cite{HeHi} extend White's conjecture to discrete polymatroids. They also ask if the toric ideal $I_P$ of a discrete polymatroid possesses a quadratic Gr\"{o}bner basis (we refer the reader to \cite{St1}). 

\begin{remark}
Theorem \ref{TheoremMain1} and Theorem \ref{TheoremMain2} are true for discrete polymatroids. 
\end{remark}

There are several ways to prove that our results hold also for discrete polymatroids. One possibility is to use 
Lemma $5.4$ from \cite{HeHi}. It reduces a question if a binomial is generated by quadratic binomials corresponding to symmetric exchanges from a discrete polymatroid to a certain matroid. 
Another possibility is to associate to a discrete polymatroid $P\subset\Z^n$ a matroid $M_P$ on the ground set $\{1,\dots,r(P)\}\times\{1,\dots,n\}$. A set $I$ is independent if there is $v\in P$ such that $\lvert I\cap\{1,\dots,r(P)\}\times\{i\}\rvert\leq v_i$ holds for all $i$. It is straightforward that compatibility of sequences of bases and generation are the same in $P$ and in $M_P$. Moreover, one can easily show that a symmetric exchange in $M_P$ corresponds to at most two symmetric exchanges in $P$.


\end{document}